\documentclass[11pt]{amsart}

\usepackage[T1]{fontenc}
\usepackage{lmodern}
\usepackage{microtype}
\usepackage{amsmath,amssymb,amsthm,mathtools}
\usepackage{enumitem}
\usepackage[hidelinks]{hyperref}

\newtheorem{theorem}{Theorem}[section]
\newtheorem{proposition}[theorem]{Proposition}
\newtheorem{corollary}[theorem]{Corollary}
\newtheorem{lemma}[theorem]{Lemma}
\theoremstyle{remark}
\newtheorem{remark}[theorem]{Remark}
\newtheorem{question}[theorem]{Question}

\newcommand{\C}{\mathbb{C}}
\newcommand{\K}{\mathbb{K}}
\newcommand{\spn}{\operatorname{span}}

\title[Idempotent-free non-solvable evolution algebras]
{Idempotent-free non-solvable evolution algebras over $\C$}

\makeatletter
\def\author@andify{%
  \nxandlist{\unskip ,\penalty-1 \space\ignorespaces}%
    {\unskip,\space}%
    {\unskip,\penalty-2 \space}%
}
\makeatother

\author[X.-Y. Hu]{Xing-Yu Hu}
\address{School of Mathematics and Statistics, Hanjiang Normal University, Shiyan 442000, Hubei, China}
\email{huxingyu@hjnu.edu.cn}

\author[R. Wen]{Ran Wen\textsuperscript{*}}
\address{School of Mathematics and Physics, Xinjiang Hetian College, Hetian 848000, Xinjiang, China}
\email{1837496033@qq.com}
\thanks{\(*\)Corresponding author: Ran Wen.}


\subjclass[2020]{Primary 17D92, Secondary 17A60}
\keywords{Evolution algebra, solvable algebra, idempotent, derived series}

\hypersetup{
  pdftitle={Idempotent-free non-solvable evolution algebras over C},
  pdfauthor={Xing-Yu Hu, Ran Wen},
  pdfsubject={Evolution algebras, solvability and idempotents},
  pdfkeywords={evolution algebra, solvable algebra, idempotent, derived series}
}

\begin{document}

\begin{abstract}
A recent conjecture states that a finite-dimensional complex evolution algebra is solvable if and only if it has no non-zero idempotents. We exhibit a three-dimensional counterexample over $\C$ whose isomorphism class already appears in the classification of three-dimensional complex evolution algebras. Since the conjecture is known in dimensions one and two, this counterexample has the smallest possible dimension. The algebra is defined over every field of characteristic different from $2$ and is the exceptional member of a one-parameter family of pairwise non-isomorphic non-solvable evolution algebras whose idempotents are determined explicitly. For the exceptional parameter, the derived series stabilises at a non-zero two-dimensional subalgebra whose only idempotent is zero. Direct sums with zero algebras give complex counterexamples in every dimension at least three. We also prove the conjectured equivalence whenever the stable term of the derived series is an evolution algebra.
\end{abstract}

\maketitle
\enlargethispage{5pt}

\section{Introduction}

An evolution algebra over a field $\K$ is a commutative $\K$-algebra $E$, not necessarily associative, that admits a basis $B=\{e_1,\ldots,e_n\}$ such that
\[
 e_i e_j=0 \qquad (i\ne j).
\]
Such a basis is called a natural basis \cite[p.~112]{TianVojtechovsky2006}. If
\[
 e_i^2=\sum_{j=1}^n a_{ij}e_j,
\]
then $M_B(E)=(a_{ij})$ denotes the structure matrix relative to $B$ \cite[p.~600]{GarciaPerez2026}. Evolution algebras were introduced in connection with non-Mendelian genetics and have since been studied as a class of non-associative algebras \cite{TianVojtechovsky2006,Tian2008}. Subsequent work has developed their nilpotency and solvability theory \cite{CamachoEtAl2013,CasasEtAl2014,FernandezEtAl2022,LadraEtAl2025}, graph-theoretic methods \cite{ElduqueLabra2015,CeballosNunezTenorio2022}, and the structure of evolution algebras satisfying $E^2=E$ \cite{SriwongsaZou2022,WeiZou2023,GarciaPerez2026}.

For subspaces $U,V$ of an algebra, write $UV$ for the span of the products $uv$ with $u\in U$ and $v\in V$. Define the derived series by
\[
 A^{(1)}=A, \qquad A^{(m+1)}=A^{(m)}A^{(m)} \quad (m\ge 1).
\]
The algebra $A$ is solvable if $A^{(m)}=0$ for some $m$ \cite[p.~602]{GarciaPerez2026}. An evolution algebra $E$ is regular (or perfect) if $E^2=E$ \cite[p.~600]{GarciaPerez2026}. An idempotent is an element $x\in A$ satisfying $x^2=x$ \cite[p.~601]{GarciaPerez2026}. We call $A$ idempotent-free if $0$ is its only idempotent.

Garc\'ia-Mart\'inez and P\'erez-Rodr\'iguez proved that every finite-dimensional complex regular evolution algebra admits a non-zero idempotent \cite[Theorem~3.5]{GarciaPerez2026}. They then conjectured that, for a finite-dimensional complex evolution algebra $E$,
\begin{equation}\label{eq:conjecture}
 E\text{ is solvable}
 \quad\Longleftrightarrow\quad
 E\text{ has no non-zero idempotents}.
\end{equation}
They verified \eqref{eq:conjecture} in dimensions one and two \cite[p.~602, following Conjecture~3.6]{GarciaPerez2026}.

A related pair of claims appears in \cite[Proposition~3 and Corollary~2]{Ceballos2026}, where every $S$-graphicable algebra is said to have only the zero idempotent and to be non-solvable. The first claim fails in general. For the $S$-graphicable algebra over $\C$ associated with $K_2$,
\[
 e_1^2=e_2, \qquad e_2^2=e_1,
\]
the element $e_1+e_2$ is a non-zero idempotent. Thus these claims do not establish a counterexample to \eqref{eq:conjecture}.

We construct a one-parameter family of non-solvable evolution algebras and determine the idempotents of every member. Proposition~\ref{prop:family-isomorphisms} shows that the squares of distinct members are non-isomorphic, and hence so are the members themselves. Its exceptional member has integral structure constants, is idempotent-free, and gives a three-dimensional counterexample over every field of characteristic different from $2$. Over $\C$, no counterexample exists in smaller dimension.

The classification in \cite[Theorem~3.5]{CabreraEtAl2017} covers all three-dimensional complex evolution algebras. Over $\C$, the isomorphism class of $E_2$ is already listed there. By \cite[Proposition~9]{CeballosNunezTenorio2022}, only four connected three-vertex pseudodigraph configurations remain potentially solvable. For each complex member of the family in Theorem~\ref{thm:family}, the pseudodigraph associated with the displayed natural basis lies outside those four configurations. The algebra in \cite[Example~1.2]{TianVojtechovsky2006} is isomorphic to $E_1$. Remark~\ref{rem:classification-location} records the explicit isomorphism, the position of $E_2$ in the classification, and the difference between the structure-matrix conventions.

Theorem~\ref{thm:family} determines the idempotents of every member of the family. At $\lambda=2$, the algebra $E_2$ has only the zero idempotent and therefore disproves \eqref{eq:conjecture}. Its stable term of the derived series is not an evolution algebra, and Proposition~\ref{prop:stable-term} shows that every finite-dimensional complex counterexample must have this property.

\section{Main results}

\subsection{A non-solvable family and its idempotents}

\begin{theorem}\label{thm:family}
Let $\K$ be a field, let $\lambda\in\K^\times$, and let $E_\lambda$ be the evolution algebra with natural basis $B=\{e_1,e_2,e_3\}$ and multiplication
\begin{equation}\label{eq:family-multiplication}
 e_1^2=-e_1-e_3, \qquad
 e_2^2=-2e_1+\lambda e_2+(\lambda-2)e_3, \qquad
 e_3^2=e_1+e_3.
\end{equation}
Then $E_\lambda$ is not solvable. If $\lambda\ne2$, it has exactly one non-zero idempotent, namely
\begin{equation}\label{eq:family-idempotent}
 \frac{1}{\lambda(2-\lambda)}(e_1+e_3)
 +\frac{1}{\lambda}(e_2+e_3).
\end{equation}
If $\lambda=2$, then $0$ is its only idempotent.
\end{theorem}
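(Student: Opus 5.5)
The plan is to rewrite the multiplication in terms of the two vectors
\[
u=e_1+e_3,\qquad v=e_2+e_3 .
\]
Both the derived series and the idempotent equation reduce to a two-dimensional computation in the span of $u$ and $v$.

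\textbf{Step 1: the square of an arbitrary element.} By \eqref{eq:family-multiplication}, $e_1^2=-u$, $e_3^2=u$ and $e_2^2=\lambda v-2u$. For $x=x_1e_1+x_2e_2+x_3e_3$, the evolution property ($e_ie_j=0$ for $i\ne j$) gives
\[
x^2=(x_3^2-x_1^2-2x_2^2)\,u+\lambda x_2^2\,v .
\]
In particular $E_\lambda^2\subseteq\spn\{u,v\}$. Since $e_3^2=u$ and $e_2^2=\lambda v-2u$ with $\lambda\ne0$, we get $E_\lambda^{(2)}=E_\lambda^2=\spn\{u,v\}$, which is two-dimensional because $u$ and $v$ are clearly linearly independent.

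\textbf{Step 2: non-solvability.} Using only $e_ie_j=0$ for $i\ne j$, I will compute the products inside $\spn\{u,v\}$:
\[
u^2=e_1^2+e_3^2=0,\qquad uv=e_3^2=u,\qquad v^2=e_2^2+e_3^2=\lambda v-u .
\]
Hence $E_\lambda^{(3)}=\spn\{u,\ \lambda v-u\}=\spn\{u,v\}=E_\lambda^{(2)}$, again because $\lambda\ne0$. The derived series therefore stabilises at a nonzero subspace, so $E_\lambda$ is not solvable. No restriction on the characteristic is needed for this step.

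\textbf{Step 3: idempotents.} If $x^2=x$, then $x\in E_\lambda^2=\spn\{u,v\}$. So I can write $x=au+bv$, which means $x_1=a$, $x_2=b$ and $x_3=a+b$. Substituting into Step 1 gives
\[
x^2=(2ab-b^2)\,u+\lambda b^2\,v .
\]
Comparing coefficients of the independent vectors $u,v$ yields the system
\[
\lambda b^2=b,\qquad 2ab-b^2=a .
\]
\begin{itemize}
\item If $b=0$, the second equation forces $a=0$, so $x=0$.
\item Otherwise $b=1/\lambda$, and the second equation becomes $a(2-\lambda)/\lambda=1/\lambda^2$.
\begin{itemize}
\item For $\lambda\ne2$ this gives the unique solution $a=1/(\lambda(2-\lambda))$, which is exactly \eqref{eq:family-idempotent}.
\item For $\lambda=2$ the left side vanishes while $1/\lambda^2\ne0$. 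There is no solution, so $0$ is the only idempotent.
\end{itemize}
\end{itemize}

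\textbf{Main obstacle.} The only real work is finding the change of variables to $u,v$. Once $e_2^2=\lambda v-2u$ is noticed, every remaining step is a short coefficient comparison. I expect no difficulty beyond double-checking the signs in the formula for $x^2$.
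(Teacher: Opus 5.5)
Your proposal is correct and follows the paper's proof essentially step for step. It uses the same substitution $u=e_1+e_3$, $v=e_2+e_3$, the same stabilisation $E_\lambda^{(m)}=\spn\{u,v\}$ for $m\ge2$ via $u^2=0$, $uv=u$, $v^2=\lambda v-u$, and the same system $\lambda b^2=b$, $2ab-b^2=a$ for an idempotent $x=au+bv$. The only cosmetic difference is that you compute $x^2$ from a general coordinate formula in the $e_i$ rather than from the products inside $\spn\{u,v\}$, which changes nothing.
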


\begin{proof}
Put
\[
 u=e_1+e_3, \qquad v=e_2+e_3,
\]
and let $S=\spn\{u,v\}$. From \eqref{eq:family-multiplication},
\[
 e_1^2=-u, \qquad e_2^2=-2u+\lambda v, \qquad e_3^2=u.
\]
Since $\lambda\ne0$, we have $E_\lambda^2=S$. The products in $S$ are
\begin{equation}\label{eq:stable-products-family}
 u^2=0, \qquad uv=u, \qquad v^2=-u+\lambda v.
\end{equation}
Thus $S^2=S$, because $u=uv$ and $v=(u+v^2)/\lambda$. It follows that
\[
 E_\lambda^{(m)}=S \qquad (m\ge2),
\]
so $E_\lambda$ is not solvable.

Let $x\in E_\lambda$ be an idempotent. Since $x=x^2$, we have $x\in E_\lambda^2=S$. Write $x=au+bv$. By \eqref{eq:stable-products-family}, the equation $x^2=x$ is equivalent to
\[
 2ab-b^2=a, \qquad \lambda b^2=b.
\]
The second equation implies $b=0$ or $b=1/\lambda$. If $b=0$, then the first equation gives $a=0$. If $b=1/\lambda$ and $\lambda\ne2$, then
\[
 a=\frac{1}{\lambda(2-\lambda)},
\]
so $x$ is the idempotent displayed in \eqref{eq:family-idempotent}. If $\lambda=2$, then $\lambda\in\K^\times$ forces $\operatorname{char}\K\ne2$, and the first equation becomes
\[
 a-\frac14=a,
\]
which is impossible. The theorem follows.
\end{proof}

Specialising to $\lambda=2$ gives the counterexample.

\begin{corollary}\label{cor:counterexample}
Let $\K$ be a field of characteristic different from $2$, and let $E=E_2$. Then $E$ is a three-dimensional non-solvable evolution algebra whose only idempotent is $0$.
\end{corollary}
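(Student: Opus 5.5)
The plan is to specialise Theorem~\ref{thm:family} to $\lambda=2$. The one thing to check is that the theorem's hypothesis $\lambda\in\K^\times$ holds. Since $\operatorname{char}\K\ne2$, the element $2$ is non-zero in $\K$, hence invertible, so $\lambda=2\in\K^\times$ and $E_2$ is defined. Its structure constants,
\[
 e_1^2=-e_1-e_3, \qquad e_2^2=-2e_1+2e_2, \qquad e_3^2=e_1+e_3,
\]
are integers, so the multiplication makes sense over any such field.

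Next I will record that $E_2$ is three-dimensional and an evolution algebra. The set $B=\{e_1,e_2,e_3\}$ is a natural basis by construction, since the multiplication in \eqref{eq:family-multiplication} is declared on a three-element basis with $e_ie_j=0$ for $i\ne j$ and extended commutatively and bilinearly.

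The remaining two properties come directly from Theorem~\ref{thm:family}. Non-solvability is its first assertion, valid for every $\lambda\in\K^\times$; concretely, the derived series stabilises at $S=\spn\{e_1+e_3,e_2+e_3\}\neq0$. The theorem's last assertion, the case $\lambda=2$, says that $0$ is the only idempotent.

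There is no real obstacle here. The only point needing care is the characteristic hypothesis. In characteristic $2$ we would have $\lambda=2=0$, so the theorem would not apply, and $E_2^2$ would collapse. The proof will note explicitly that this is exactly why characteristic $2$ is excluded.
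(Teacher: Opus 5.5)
Your proposal is correct and matches the paper's proof, which simply specialises Theorem~\ref{thm:family} to $\lambda=2$. Your explicit check that $\operatorname{char}\K\ne2$ gives $2\in\K^\times$ is the only hypothesis to verify. Your side remark on characteristic $2$ is also accurate: there $e_2^2=0$ and $(e_1+e_3)^2=0$, so the algebra becomes solvable.
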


\begin{proof}
This is Theorem~\ref{thm:family} with $\lambda=2$.
\end{proof}

\begin{lemma}\label{lem:not-evolution}
Let $\K$ be a field, let $\lambda\in\K^\times$, and let $E_\lambda$ be as in Theorem~\ref{thm:family}. Set $S_\lambda=E_\lambda^2=\spn\{e_1+e_3,e_2+e_3\}$. Then $S_\lambda$ is not an evolution algebra.
\end{lemma}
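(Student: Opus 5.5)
Our plan is to show directly that $S=S_\lambda$, with the multiplication \eqref{eq:stable-products-family}, has no natural basis. Suppose $\{f_1,f_2\}$ is a basis of $S$ with $f_1f_2=0$. Write $f_1=a u+b v$ and $f_2=c u+d v$ with $ad-bc\ne0$. Bilinearity and \eqref{eq:stable-products-family} give
\[
 f_1f_2 = (ad+bc)\,u + bd\,(-u+\lambda v) = (ad+bc-bd)\,u + \lambda bd\, v .
\]
Since $u,v$ are linearly independent and $\lambda\ne0$, the condition $f_1f_2=0$ forces $bd=0$ and then $ad+bc=0$.

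We then split on which coefficient vanishes. If $b=0$, then $ad=0$. But $a\ne0$, since otherwise $f_1=0$, so $d=0$. Now both $f_1$ and $f_2$ are multiples of $u$, which contradicts linear independence. The case $d=0$ is symmetric: it forces $bc=0$, hence $b=0$ (as $c\ne0$), and again both vectors lie in $\spn\{u\}$. So no natural basis exists, and $S$ is not an evolution algebra.

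This holds in any characteristic, because we only used $\lambda\ne0$. Characteristic $2$ needs no special care, since the computation never divides by $2$. Equivalently, one can say this more conceptually: any nonzero $w=au+bv$ with $b\ne0$ has annihilator in $S$ equal to $0$, while the annihilator of $u$ is $\spn\{u\}$. So two orthogonal independent vectors cannot exist.

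There is no real obstacle here. The only point needing care is the bookkeeping in the product $f_1f_2$, in particular the cross term $ad\,uv+bc\,vu=(ad+bc)u$, which uses commutativity. We also need the observation that the $v$-coefficient $\lambda bd$ is the one that pins down $bd=0$.
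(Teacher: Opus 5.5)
Your proof is correct and follows essentially the paper's argument: you use the same expansion $f_1f_2=(ad+bc-bd)u+\lambda bd\,v$, deduce $bd=0$ from $\lambda\ne0$ in the same way, and split into the same two cases $b=0$ and $d=0$. The only cosmetic difference is that the paper obtains the contradiction directly from $ad-bc\ne0$ (so $ad\ne0$, respectively $bc\ne0$), whereas you derive linear dependence of $f_1,f_2$.
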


\begin{proof}
Write $u=e_1+e_3$ and $v=e_2+e_3$. Suppose that $S_\lambda$ has a natural basis
\[
 f=au+bv, \qquad g=cu+dv.
\]
Then $ad-bc\ne0$ and $fg=0$. Since
\[
 u^2=0, \qquad uv=u, \qquad v^2=-u+\lambda v,
\]
we have
\[
 fg=(ad+bc-bd)u+\lambda bdv.
\]
Because $\lambda\ne0$, the equality $fg=0$ implies $bd=0$. If $b=0$, then $ad\ne0$, but the coefficient of $u$ in $fg$ is $ad$, a contradiction. If $d=0$, then $bc\ne0$, but the coefficient of $u$ is $bc$, again a contradiction.
\end{proof}

\begin{proposition}\label{prop:family-isomorphisms}
Let $\K$ be a field and let $\lambda,\mu\in\K^\times$. Then $E_\lambda^2$ and $E_\mu^2$ are isomorphic as $\K$-algebras if and only if $\lambda=\mu$. Consequently, $E_\lambda$ and $E_\mu$ are isomorphic as $\K$-algebras if and only if $\lambda=\mu$.
\end{proposition}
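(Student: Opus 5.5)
We need an isomorphism invariant of $S_\lambda=E_\lambda^2$ that recovers $\lambda$. Recall that $S_\lambda=\spn\{u,v\}$ with
\[
 u^2=0,\qquad uv=u,\qquad v^2=-u+\lambda v .
\]
The plan is to describe intrinsically a distinguished element of $S_\lambda$ and read $\lambda$ off from it. The \enquote{if} direction is trivial, so we concentrate on showing that an isomorphism $\varphi\colon S_\lambda\to S_\mu$ forces $\lambda=\mu$.

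\textbf{Step 1: the square-zero elements.} Compute the elements $x=au+bv$ with $x^2=0$. We have
\[
 x^2=(2ab-b^2)u+\lambda b^2 v .
\]
Since $\lambda\ne0$, the equation $x^2=0$ forces $b=0$. Hence the square-zero elements are exactly the line $\K u$, which is intrinsic. Any isomorphism therefore sends $u\mapsto \alpha u'$ for some $\alpha\ne0$, where $u',v'$ denote the corresponding basis of $S_\mu$.

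\textbf{Step 2: the distinguished element $v$.} Consider the multiplication operators $L_y$ restricted to $\K u$. For $y=cu+dv$ we get $yu=du$, so $L_y$ acts on $\K u$ by the scalar $d$. The elements $y$ with $yu=u$ are exactly $y=cu+v$, an affine line determined intrinsically by $u$. Under $\varphi$ we have
\[
 \varphi(y)\varphi(u)=\varphi(u),
\]
so $\varphi(v)=c'u'+v'$ for some $c'\in\K$; note this is independent of the scaling $\alpha$.

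\textbf{Step 3: compare squares.} Now compare the squares of these elements. In $S_\lambda$, for $y=cu+v$,
\[
 y^2=(2c-1)u+\lambda v=\lambda y+(2c-1-\lambda c)u ,
\]
so $y^2\equiv \lambda y \pmod{\K u}$. Thus $\lambda$ is the scalar by which squaring acts on the class of $y$ in $S_\lambda/\K u$. This scalar is intrinsic, since $\K u$ is intrinsic and $y$ is determined modulo $\K u$. Applying $\varphi$ to $v^2=-u+\lambda v$ gives
\[
 \varphi(v)^2=-\alpha u'+\lambda(c'u'+v') ,
\]
while directly $\varphi(v)^2\equiv \mu v'\pmod{\K u'}$. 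Comparing $v'$-coefficients yields $\lambda=\mu$.

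\textbf{Step 4: the consequence.} Any isomorphism $E_\lambda\to E_\mu$ maps $E_\lambda^2$ onto $E_\mu^2$, hence restricts to an isomorphism $S_\lambda\cong S_\mu$, and Step 3 gives $\lambda=\mu$. The only real obstacle is isolating the intrinsic objects ($\K u$ and the affine line $u\cdot y=u$); once those are in place, the computation is routine.
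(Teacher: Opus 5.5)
Your proof is correct and follows the paper's argument essentially step for step. The square-zero elements form the line $\K u$; the relation $uv=u$ forces $\varphi(v)\in v'+\K u'$; comparing $v'$-coefficients of $\varphi(v)^2$ and $\varphi(v^2)$ gives $\lambda=\mu$; and the statement for $E_\lambda$ follows by restricting the isomorphism to $E_\lambda^2$. Your reading of $\lambda$ as the scalar by which squaring acts on $S_\lambda/\K u$ is a nice intrinsic gloss, but the underlying computation is the same as the paper's.
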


\begin{proof}
Only the forward implication in the first assertion requires proof. For $\nu\in\{\lambda,\mu\}$, write $S_\nu=E_\nu^2$ and choose $u_\nu,v_\nu$ as in the proof of Theorem~\ref{thm:family}. Thus
\[
 S_\nu=\spn\{u_\nu,v_\nu\}, \qquad
 u_\nu^2=0, \qquad u_\nu v_\nu=u_\nu, \qquad v_\nu^2=-u_\nu+\nu v_\nu.
\]
If $x=au_\nu+bv_\nu\in S_\nu$ and $x^2=0$, then the coefficient of $v_\nu$ in $x^2$ is $\nu b^2$. Since $\nu\ne0$, this gives $b=0$. Hence the square-zero elements of $S_\nu$ form the line $\K u_\nu$.

Suppose that $\varphi\colon S_\lambda\to S_\mu$ is an isomorphism. There are $\alpha,\beta,\gamma\in\K$ with $\alpha\gamma\ne0$ such that
\[
 \varphi(u_\lambda)=\alpha u_\mu, \qquad
 \varphi(v_\lambda)=\beta u_\mu+\gamma v_\mu.
\]
Applying $\varphi$ to $u_\lambda v_\lambda=u_\lambda$ gives $\gamma=1$. Comparing the coefficients of $v_\mu$ in
\[
 \varphi(v_\lambda)^2=(2\beta-1)u_\mu+\mu v_\mu
 \qquad\text{and}\qquad
 \varphi(v_\lambda^2)=(-\alpha+\lambda\beta)u_\mu+\lambda v_\mu
\]
then gives $\lambda=\mu$. The converse is immediate. Finally, any isomorphism $E_\lambda\to E_\mu$ maps $E_\lambda^2$ onto $E_\mu^2$, so the last assertion follows.
\end{proof}

\begin{remark}\label{rem:classification-location}
For $E=E_2$, the structure matrix with respect to the displayed natural basis is
\[
 M_B(E)=
 \begin{pmatrix}
 -1&0&-1\\
 -2&2&0\\
 1&0&1
 \end{pmatrix}.
\]
This matrix has rank two. More generally, the structure matrix of every finite-dimensional complex counterexample to \eqref{eq:conjecture} must be singular, since every regular complex evolution algebra has a non-zero idempotent by \cite[Theorem~3.5]{GarciaPerez2026}.

Over $\C$, put
\[
 f_1=\frac12e_2,\qquad f_2=e_3,\qquad f_3=\mathrm{i}e_1.
\]
This is again a natural basis, and a direct calculation shows that
\[
 f_1^2=f_1+\frac{\mathrm{i}}2f_3,\qquad
 f_2^2=f_2-\mathrm{i}f_3,\qquad
 f_3^2=f_2-\mathrm{i}f_3.
\]
Consequently,
\[
 M_{\{f_1,f_2,f_3\}}(E)=
 \begin{pmatrix}
 1&0&\mathrm{i}/2\\
 0&1&-\mathrm{i}\\
 0&1&-\mathrm{i}
 \end{pmatrix}.
\]
In \cite[p.~73]{CabreraEtAl2017}, structure matrices are transposed relative to our convention, so the tables in that paper must be compared with the transpose of the displayed matrix. The case $\dim E^2=2$ in \cite[Theorem~3.5(iii)]{CabreraEtAl2017} already contains the isomorphism class of the counterexample. For the representative $E_2$, Theorem~\ref{thm:family} shows that $E_2^{(m)}=E_2^2\ne0$ for every $m\ge2$ and that $0$ is its only idempotent. Hence $E_2$ disproves \eqref{eq:conjecture}.

Over $\C$, the algebra $A$ in \cite[Example~1.2]{TianVojtechovsky2006} is isomorphic to $E_1$ via $a_1\mapsto e_1+2e_3$, $a_2\mapsto2e_1+e_3$, and $a_3\mapsto e_2$. In the notation of that example, $A^2=\spn\{a_1+a_2,a_1+a_3\}$, and the isomorphism maps $A^2$ onto $E_1^2$. Thus $E_1^2$ already fails to be an evolution algebra, whereas the idempotent-free counterexample occurs at $\lambda=2$.
\end{remark}

\subsection{Minimal dimension and counterexamples in all dimensions}

\begin{corollary}\label{cor:all-dimensions}
Over $\C$, the equivalence between solvability and the absence of non-zero idempotents holds in dimensions at most two and fails in every dimension at least three.
\end{corollary}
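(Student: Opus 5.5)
The corollary has two parts: the equivalence holds in dimensions one and two, and it fails in every dimension $n\ge3$. For dimensions at most two I will simply cite the verification of \eqref{eq:conjecture} by Garc\'ia-Mart\'inez and P\'erez-Rodr\'iguez \cite[p.~602]{GarciaPerez2026}. No new argument is needed there.

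For failure in dimension $n\ge3$, take $E=E_2$ over $\C$ from Corollary~\ref{cor:counterexample}. Let $Z_k$ be the $k$-dimensional zero algebra, with basis $z_1,\dots,z_k$ and all products zero, and set $F=E\oplus Z_{n-3}$. Three things must be checked.

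\emph{$F$ is an evolution algebra.} The set $\{e_1,e_2,e_3,z_1,\dots,z_{n-3}\}$ is a natural basis. Indeed, $e_ie_j=0$ for $i\ne j$, the $z$'s multiply trivially with everything, and products between the summands vanish.

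\emph{$F$ is not solvable.} Since $(E\oplus Z)(E\oplus Z)=E^2\oplus 0$, induction gives $F^{(m)}=E^{(m)}\oplus 0$ for $m\ge2$. By Theorem~\ref{thm:family}, $E^{(m)}=E^2\ne0$ for all $m\ge2$, so $F^{(m)}\ne0$ for every $m$.

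\emph{$F$ is idempotent-free.} Write $x=y+z$ with $y\in E$ and $z\in Z_{n-3}$. Then $x^2=y^2$, so $x^2=x$ forces $z=0$ and $y^2=y$. Theorem~\ref{thm:family} then gives $y=0$.

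None of these steps is a real obstacle. The only point needing care is the bookkeeping in the direct sum: the derived series and the idempotent equation must be computed componentwise, using that cross products between $E$ and $Z_{n-3}$ vanish. For $n=3$ we take $F=E$ itself, with no zero summand.
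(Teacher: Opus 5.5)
Your proof is correct. For $n\ge3$ it matches the paper: you use $E_2\oplus Z_{n-3}$, and you compute the derived series and the idempotent equation componentwise because all cross products vanish.

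The only difference is in dimensions at most two. You cite the verification of Garc\'ia-Mart\'inez and P\'erez-Rodr\'iguez, whereas the paper gives a short self-contained argument:
\begin{enumerate}[label=\textup{(\roman*)}]
 \item Solvability always excludes non-zero idempotents, since an idempotent lies in every derived term.
 \item In dimension one, $e^2=\lambda e$ either vanishes or yields the idempotent $\lambda^{-1}e$.
 \item In dimension two, a full-rank structure matrix would make $E$ regular, and \cite[Theorem~3.5]{GarciaPerez2026} would then give a non-zero idempotent. Hence $E^2=0$ or $E^2=\C u$. In the latter case $u^2=\lambda u$, and $\lambda\ne0$ would give the idempotent $\lambda^{-1}u$. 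So $u^2=0$ and $E^{(3)}=0$.
\end{enumerate}

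Your citation is legitimate, since the paper itself records that verification in its introduction, and it keeps the proof shorter. The paper's version is more transparent. It shows that the regular-case theorem is the only external input. It also shows why the argument cannot go further: in dimension three, $E^2$ can be a two-dimensional subalgebra with $(E^2)^2=E^2$ that is not an evolution algebra, and then \cite[Theorem~3.5]{GarciaPerez2026} no longer applies.
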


\begin{proof}
In any algebra, solvability implies the absence of non-zero idempotents, since every non-zero idempotent belongs to every term of the derived series. Conversely, in dimension one write $e^2=\lambda e$. If $\lambda=0$, the algebra is solvable. If $\lambda\ne0$, then $\lambda^{-1}e$ is a non-zero idempotent.

Now let $E$ be two-dimensional and assume that it has no non-zero idempotents. If its structure matrix had full rank, then $E^2=E$, making $E$ regular. It would then have a non-zero idempotent by \cite[Theorem~3.5]{GarciaPerez2026}, a contradiction. Thus $\dim E^2\le1$. If $E^2=0$, then $E$ is solvable. Otherwise, write $E^2=\C u$. Since $u^2\in E^2$, we have $u^2=\lambda u$ for some $\lambda\in\C$. If $\lambda\ne0$, then $\lambda^{-1}u$ is a non-zero idempotent. Hence $u^2=0$, and therefore $E^{(3)}=0$. The equivalence follows in dimensions at most two, while Corollary~\ref{cor:counterexample} supplies a counterexample in dimension three.

For $n>3$, let $F=E_2$ be the three-dimensional algebra in Corollary~\ref{cor:counterexample}, let $Z_{n-3}$ denote the $(n-3)$-dimensional zero algebra, and put
\[
 E_n=F\oplus Z_{n-3}.
\]
Combining a natural basis of $F$ with one of $Z_{n-3}$ gives a natural basis of $E_n$, so $E_n$ is an evolution algebra. The proof of Theorem~\ref{thm:family} gives $F^{(m)}=F^2\ne0$ for every $m\ge2$. Hence $E_n^{(m)}=F^2\oplus0\ne0$ for every $m\ge2$, so $E_n$ is not solvable. If $(x,z)\in E_n$ is idempotent, then $x^2=x$ and $z^2=z$. The multiplication on $Z_{n-3}$ is zero, so $z=0$, and Corollary~\ref{cor:counterexample} then gives $x=0$.
\end{proof}

\begin{remark}
The minimal-dimension statement in Corollary~\ref{cor:all-dimensions} is specific to $\C$. Over $\mathbb F_3$, the two-dimensional simple evolution algebra $E$ given by
\[
 e_1^2=e_1+2e_2,\qquad e_2^2=e_1+e_2
\]
has no non-zero idempotents \cite[Example~44]{GonzalezNevado2025}. Since $E^2$ is a non-zero ideal and $E$ is simple, we have $E^2=E$, so the algebra is not solvable.
\end{remark}

\subsection{Stable derived terms and solvability}

A sufficient condition for the conjectured equivalence is that the stable term of the derived series be an evolution algebra.

\begin{proposition}\label{prop:stable-term}
Let $E$ be a finite-dimensional complex evolution algebra. The derived series of $E$ stabilises. Let $D$ denote its stable term. If $D$ is an evolution algebra under the induced multiplication, then the following conditions are equivalent.
\begin{enumerate}[label=\textup{(\roman*)}]
 \item $E$ is solvable.
 \item $E$ has no non-zero idempotents.
\end{enumerate}
\end{proposition}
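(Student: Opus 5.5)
The plan has two parts. First we show that the derived series stabilises. Then we prove the two implications, the direction (ii)$\Rightarrow$(i) being the substantive one.

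\emph{Stabilisation.} The terms $E^{(m)}$ form a descending chain of subspaces of the finite-dimensional space $E$: indeed $E^{(2)}\subseteq E^{(1)}$, and inductively $E^{(m+1)}=E^{(m)}E^{(m)}\subseteq E^{(m-1)}E^{(m-1)}=E^{(m)}$. Their dimensions therefore stabilise. Once $E^{(k+1)}=E^{(k)}$, every later term equals $E^{(k)}$. Put $D=E^{(k)}$. By construction $D$ is a subalgebra satisfying $D^2=D$.

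\emph{(i)$\Rightarrow$(ii).} This is the easy direction already used in the proof of Corollary~\ref{cor:all-dimensions}. If $x=x^2$, then induction gives $x\in E^{(m)}$ for every $m$: if $x\in E^{(m)}$, then $x=xx\in E^{(m)}E^{(m)}=E^{(m+1)}$. So solvability forces $x=0$.

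\emph{(ii)$\Rightarrow$(i).} We argue by contraposition. Suppose $E$ is not solvable, so $D\neq0$. By hypothesis $D$, with the induced multiplication, is an evolution algebra. It is finite-dimensional, complex, and satisfies $D^2=D$, so it is regular. By \cite[Theorem~3.5]{GarciaPerez2026}, $D$ has a non-zero idempotent $x$. Since $D$ is a subalgebra of $E$ and the product in $D$ is the restriction of that of $E$, the element $x$ is a non-zero idempotent of $E$, contradicting (ii).

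The only point that needs care is checking that $D$ really meets the hypotheses of \cite[Theorem~3.5]{GarciaPerez2026}. Regularity there means $D^2=D$ computed inside $D$. This holds because $D^2=E^{(k+1)}=D$ and the spans agree. One must also note that $D$ is nonzero and finite-dimensional. With these points checked the argument is short. The hypothesis that $D$ is an evolution algebra is exactly what Lemma~\ref{lem:not-evolution} shows fails for $E_2$.
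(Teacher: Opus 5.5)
Your proof is correct and follows essentially the same route as the paper. You get stabilisation of the descending derived series from finite-dimensionality, you prove (i)$\Rightarrow$(ii) by noting that an idempotent lies in every derived term, and you prove the converse by applying \cite[Theorem~3.5]{GarciaPerez2026} to the non-zero regular stable term $D$. The only cosmetic difference is how the descending chain is obtained: you use monotonicity of products of subspaces, whereas the paper inducts to show that each $E^{(r)}$ is a subalgebra.
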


\begin{proof}
By induction, every derived term is a subalgebra and the derived series is descending. Indeed, $E^{(1)}=E$ is a subalgebra. If $E^{(r)}$ is a subalgebra, then
\[
 E^{(r+1)}=E^{(r)}E^{(r)}\subseteq E^{(r)}.
\]
Since $E^{(r+1)}\subseteq E^{(r)}$, products of elements of $E^{(r+1)}$ lie in $E^{(r)}E^{(r)}=E^{(r+1)}$. Hence $E^{(r+1)}$ is also a subalgebra. Finite-dimensionality therefore gives an index $m$ such that
\[
 D=E^{(m)}=E^{(m+1)}.
\]

Every idempotent belongs to every derived term by induction from $x=x^2$, so solvability excludes non-zero idempotents. Conversely, if $E$ is not solvable, then $D\ne0$ and $D^2=D$. By hypothesis, $D$ is an evolution algebra and hence a regular complex evolution algebra. By \cite[Theorem~3.5]{GarciaPerez2026}, it contains a non-zero idempotent, which is also an idempotent of $E$.
\end{proof}

\begin{corollary}
Let $E$ be a finite-dimensional complex evolution algebra with natural basis $B$, and define the evolution operator $L_B\colon E\to E$ by
\[
 L_B(e_i)=e_i^2
\]
for every $e_i\in B$ \cite[Definition~5, p.~29]{Tian2008}. If $L_B$ is an algebra endomorphism, then $E$ is solvable if and only if it has no non-zero idempotents.
\end{corollary}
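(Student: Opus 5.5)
The plan is to reduce to Proposition~\ref{prop:stable-term} by showing that when $L_B$ is an algebra endomorphism, every term $E^{(m)}$ with $m\ge 2$ is again an evolution algebra, with a natural basis drawn from the images $L_B(e_i)=e_i^2$. The key observation is that $E^2=\spn\{e_1^2,\ldots,e_n^2\}=L_B(E)$. Since $L_B$ is multiplicative, for $i\ne j$ we get
\[
 e_i^2e_j^2=L_B(e_i)L_B(e_j)=L_B(e_ie_j)=0.
\]
So the vectors $e_i^2$ are pairwise orthogonal. I would then pick a subset $\{e_{i}^2 : i\in I\}$ forming a basis of $E^2$. It is still pairwise orthogonal, hence a natural basis, and $E^2$ is an evolution algebra under the induced multiplication.

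Next I iterate this. Because $L_B$ is a homomorphism, $L_B(E)L_B(E)=L_B(EE)$, so $E^{(3)}=(E^2)(E^2)=L_B(E^2)=L_B^2(E)$. By induction, $E^{(m)}=L_B^{m-1}(E)$ for all $m\ge1$, using $E^{(m+1)}=L_B^{m-1}(E)L_B^{m-1}(E)=L_B^{m-1}(EE)=L_B^{m}(E)$.

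Each term $E^{(m)}$ is spanned by the vectors $L_B^{m-1}(e_i)$. For $i\ne j$ these are pairwise orthogonal, since
\[
 L_B^{m-1}(e_i)\,L_B^{m-1}(e_j)=L_B^{m-1}(e_ie_j)=0.
\]
Extracting a basis from this spanning set gives a natural basis of $E^{(m)}$. In particular the stable term $D$ of Proposition~\ref{prop:stable-term}, which equals $E^{(m)}$ for $m$ large, is an evolution algebra. Applying Proposition~\ref{prop:stable-term} then yields the equivalence.

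The only point needing care is that a natural basis must be a basis of $D$ itself. This is handled by extracting a linearly independent subset of a pairwise-orthogonal spanning set, which preserves orthogonality. One should also record that the case $D=0$ is trivially fine: then $E$ is solvable, and the equivalence holds by the first half of the proof of Proposition~\ref{prop:stable-term}. I expect no step to be a real obstacle. The main thing is the identity $E^{(m)}=L_B^{m-1}(E)$, which relies on the image of a product of subspaces under a homomorphism being the product of the images.
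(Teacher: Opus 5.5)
Your proposal is correct and follows the paper's route: show that the stable term $D$ of the derived series is an evolution algebra, then apply Proposition~\ref{prop:stable-term}. The only difference is that the paper gets this step by citing \cite[Proposition~3.1]{FernandezEtAl2022}, whereas you prove it directly from $E^{(m)}=L_B^{m-1}(E)$ and the pairwise orthogonality of the vectors $L_B^{m-1}(e_i)$, which makes your argument self-contained.
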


\begin{proof}
Under this hypothesis, every term of the derived series is an evolution subalgebra by \cite[Proposition~3.1]{FernandezEtAl2022}. In particular, the stable term $D$ in Proposition~\ref{prop:stable-term} is an evolution algebra, and the conclusion follows.
\end{proof}

\begin{remark}
For the algebra in Corollary~\ref{cor:counterexample}, the stable term $D=E^2$ satisfies $D^2=D$ and has no non-zero idempotents, but Lemma~\ref{lem:not-evolution} shows that $D$ is not an evolution algebra. Hence Proposition~\ref{prop:stable-term} does not apply. The same lemma shows that $E_\lambda^2$ is not an evolution algebra for any member of the family, although Theorem~\ref{thm:family} gives a non-zero idempotent whenever $\lambda\ne2$. The evolution operator $L_B$ for the displayed natural basis $B=\{e_1,e_2,e_3\}$ is not an algebra endomorphism either, since $L_B(e_2e_3)=0$ whereas $L_B(e_2)L_B(e_3)=2(e_1+e_3)\ne0$.
\end{remark}

Related work treats subalgebras with natural bases extendible to the ambient algebra \cite{CamachoKhudoyberdiyevOmirov2019}, evolution subalgebras generated by idempotents \cite{MukhamedovQaralleh2023}, and closure under subalgebras in the regular case \cite{LadraPerez2025}. The counterexample and Proposition~\ref{prop:stable-term} lead to the following more specific question.

\begin{question}\label{question:stable-term}
For which finite-dimensional evolution algebras is the stable term of the derived series an evolution algebra?
\end{question}

\end{document}